\date{}
\newtheorem{theorem}{Theorem}[section]
\newtheorem{lemma}[theorem]{Lemma}
\newenvironment{proof}[1][Proof]{\noindent\textbf{#1.} }{\ \rule{0.5em}{0.5em}}
\begin{document}

\title{A case of the generalized vanishing conjecture}
\author{Zhenzhen Feng\footnote{Email: fengzz13@mails.jlu.edu.cn}, Xiaosong Sun\footnote{Corresponding author, Email: sunxs@jlu.edu.cn}}
\affil{\small School of Mathematics, Jilin University, Changchun 130012, China}

\maketitle

\begin{abstract}
In this paper, we show that the GVC (generalized vanishing conjecture) holds for the differential operator $\Lambda=(\partial_x-\Phi(\partial_y))\partial_y$ and all polynomials $P(x,y)$, where $\Phi(t)$ is any polynomial over the base field. The GVC arose from the study of the Jacobian conjecture.
\\{\bf{\emph{Key words}}}:  Jacobian conjecture, generalized vanishing conjecture, differential operators
\\{\bf{\emph{MSC}}}: 14R15, 13N15
\end{abstract}

\section{Introduction}

The well-known Jacobian conjecture (JC for short) was first proposed by O. Keller in 1939 (see \cite{BSD} and \cite{E}), which asserts that any polynomial map $F$ from the complex affine $n$-space $\mathbb{C}^n$ to
itself with $\det JF=1$ must be an automorphism of $\mathbb{C}^n$. Various special cases of this still
mysterious conjecture have been investigated, and connections with some other notable problems have been established. For example,
the JC is related to some problems in combinatorics (cf. \cite{W}) and the JC is equivalent to the Dixmer conjecture proposed by Dixmier \cite{D} (cf. \cite{T, BK, AE}) and also to the Mathieu conjecture proposed by Mathieu \cite{M} in 1995.

 It was shown independently by de Bondt and van den Essen \cite{BE} and Meng \cite{G} that for the JC one only need to consider all polynomial maps of the form $X+H: \mathbb{C}^n\rightarrow \mathbb{C}^n$ for all dimensions $n$, where $H$ is cubic homogeneous and $JH$ is symmetric and nilpotent. Based on this result, Zhao proposed in 2007 the vanishing conjecture (see \cite{Z2, EZ}) and generalized it later in \cite{Z1} to the following form.
\vskip 0.3cm

{\it \textbf{\textup{Generalized Vanishing Conjecture (GVC(n))}} Let $\Lambda$ be any differential operator on $\mathbb{C}[z]:=\mathbb{C}[z_1,z_2,\ldots,z_n]$ with constant coefficients. If $P\in \mathbb{C}[z]$ is  such that $\Lambda^m(P^m)=0$ for all $m\geq 1$, then for any polynomial $Q\in \mathbb{C}[z]$, we have $\Lambda^m(P^mQ)=0$ for all $m\gg 0$.}
\vskip 0.3cm

In fact, Zhao showed in \cite{Z2, EZ} that the JC holds for all dimensions $n$ if and only if the GVC holds for all dimensions $n$ for the case where $\Lambda$ is the Laplace operator $\sum_{i=1}^n\partial_{z_i}^2$ and $P$ is homogeneous.

Up to now, the GVC($n$) was verified in the following special cases:  (1) $n=1$; (2) $n=2$ and $\Lambda=\partial_{z_1}-\Phi(\partial_{z_2})$; (3) $\Lambda(t)$ (or $P(z)$) is a linear combination of two monomials
with different degrees (see \cite{EWZ}); (4) $n\leq 4$, $\Lambda$ is the Laplace operator; (5) $n=5$, $\Lambda$ is the Laplace operator and $P$ is homogeneous (due to \cite{BE04, BE05} and \cite{Z1}).

In this paper, we showed that the GVC holds for the differential
operator $\Lambda=(\partial_x-\Phi(\partial_y))\partial_y$ on $\mathbb{C}[x,y]$ and all polynomials
$P(x,y)\in \mathbb{C}[x,y],$ where $\Phi(t)$ is any polynomial over $\mathbb{C}$. The conclusion is in fact valid for any field of characteristic zero.

\section{The proof of GVC for $\Lambda=(\partial_x-\Phi(\partial_y))\partial_y$}
Throughout this section, $K$ stands for a field of characteristic zero.
For simplicity, we write $K[x,y]$ instead of $K[z_1,z_2]$.  We consider the GVC for the differential operator $\Lambda=(\partial_x-\Phi(\partial_y))\partial_y$, where $\Phi(t)$ is an arbitrary polynomial over $K$. We write $\Phi(t)$ as
$$\Phi(t)=q_{0}+q_{1}t+\cdots+q_{s}t^s$$
where $q_{i}\in K,0\leq i\leq s$. And we denote by $o(\Phi(t))$ or $o(\Phi)$ the order of the polynomial $\Phi(t)$, i.e. the
least integer $m\geq 0$ such that $q_{m}\neq 0$.

We will show the following theorem.
\begin{theorem} \label{th1}
The GVC holds for the differential operator $\Lambda=(\partial_{x}-\Phi(\partial_{y}))\partial_{y}$
and all polynomials $P(x,y)\in K[x,y]$.
\end{theorem}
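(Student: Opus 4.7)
My plan is to exploit the commuting factorization $\Lambda=LM=ML$ with $L:=\partial_x-\Phi(\partial_y)$ and $M:=\partial_y$, together with the GVC for $L$ alone (case~(2) in the introduction). Since $[L,M]=0$, one has $\Lambda^m=L^mM^m=M^mL^m$, so $\Lambda^m(P^m)=0$ is equivalent both to $\deg_y(L^m(P^m))<m$ and to $L^m\bigl(M^m(P^m)\bigr)=0$ for every $m\geq 1$. The first form pins down the top $y$-coefficients of $P=\sum_j P_j(x)y^j$; the second invites us to apply the known GVC for $L$ to the sequence $F_m:=M^m(P^m)$.

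The core step is to compare $F_m$ with $m!(MP)^m$. By the multivariate Leibniz rule
\[
M^m(P^m)=\sum_{k_1+\cdots+k_m=m}\frac{m!}{k_1!\cdots k_m!}\prod_{i=1}^m M^{k_i}(P),
\]
the unique summand with all $k_i=1$ is $m!(MP)^m$, while every other term contains a factor $M^{k_i}(P)$ with $k_i\geq 2$ and is therefore built from the finite data $P,MP,M^2P,\ldots$ with bounded complexity in $m$. Using $L^m(F_m)=0$ and this decomposition, I would extract the GVC hypothesis $L^m((MP)^m)=0$ for $MP$, then invoke the GVC for $L$ to get $L^m((MP)^m\cdot S)=0$ for every $S$ and all $m\gg 0$. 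To handle a general $Q$, the Leibniz expansion
\[
M^m(P^m Q)=\sum_{\ell=0}^{\deg_y Q}\binom{m}{\ell}M^{m-\ell}(P^m)\,M^\ell(Q)
\]
is finite once $m>\deg_y Q$, and each summand should fall in the same $L$-GVC framework, yielding $\Lambda^m(P^m Q)=L^m M^m(P^m Q)=0$ for $m\gg 0$.

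The main obstacle I anticipate is that $F_m=M^m(P^m)$ is not an $m$-th power of a fixed polynomial, so the GVC for $L$ cannot be applied to it directly; the Leibniz defect terms must be controlled uniformly in $m$. I would overcome this by induction on $o(\Phi)$: if $o(\Phi)\geq 1$ then $\Phi(\partial_y)=\partial_y\,\widetilde\Phi(\partial_y)$ with $o(\widetilde\Phi)=o(\Phi)-1$, so an extra $\partial_y$ can be peeled off $\Lambda$ and absorbed into $M$; the base case $o(\Phi)=0$, where $\Phi$ has a nonzero constant term $q_0$, should reduce to $\partial_x\partial_y-q_0\partial_y$ and succumb to a direct Newton-polytope analysis of monomials of $P^m$ and $P^m Q$. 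Combining these reductions with the Leibniz-defect control described above should close the proof.
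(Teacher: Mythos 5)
Your factorization $\Lambda^m=L^mM^m$ and the two equivalent readings of $\Lambda^m(P^m)=0$ are correct, but the core step of your reduction does not go through. From $L^m\bigl(M^m(P^m)\bigr)=0$ you want to ``extract'' $L^m\bigl((MP)^m\bigr)=0$, yet nothing separates the summand $m!(MP)^m$ from the other Leibniz terms: $L^m$ respects no grading or filtration in which $m!(MP)^m$ sits alone at the top, so you cannot conclude that $L^m$ kills that term individually. Moreover the remaining terms are not of ``bounded complexity in $m$'': for even $m$ the partition with $m/2$ indices equal to $2$ and $m/2$ equal to $0$ contributes $\frac{m!}{2^{m/2}}\,P^{m/2}(M^2P)^{m/2}$, in which the number of defective factors grows linearly with $m$. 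Such a term is not of the form $(MP)^m\cdot S$ for any fixed $S$, so the known GVC for $L$ --- which only yields conclusions about $L^m(R^mS)$ for a single fixed $R$ and fixed $S$ --- cannot be applied to it. The same obstruction defeats your final Leibniz expansion of $M^m(P^mQ)$. Your fallback induction on $o(\Phi)$ also fails to type-check: writing $\Phi(\partial_y)=\partial_y\widetilde\Phi(\partial_y)$ turns $\Lambda$ into $\partial_x\partial_y-\partial_y^2\widetilde\Phi(\partial_y)$, which is not of the form $(\partial_x-\Psi(\partial_y))\partial_y^k$; the extra $\partial_y$ multiplies only the $\Phi$-part and not $\partial_x$, so it cannot be ``absorbed into $M$'' and there is no smaller instance of the same operator family to induct on.

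What is missing is any structural information about $P$. The paper's proof first uses $\Lambda P=0$ to write $P=e^{x\Phi(\partial_y)}(f(x)+g(y))$, then (after reducing to $o(\Phi)\geq 2$ by a coordinate change) uses the single additional hypothesis $\Lambda^{2}(P^{2})=0$ to force the very rigid shape $P=a_1x+g(y)$ with $\deg g\leq o(\Phi)$; for such $P$ a direct count of $y$-degrees in $\Lambda^m(P^mx^ay^b)$ gives vanishing for all $m>b+a\cdot o(\Phi)$. Your scheme never exploits the hypotheses $\Lambda^m(P^m)=0$ for $m\geq 2$ to constrain $P$ itself, and without such a classification the Leibniz-defect terms cannot be controlled uniformly in $m$.
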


We start with some lemmas.

\begin{lemma}
Let $\Lambda$ be as above and let $0\neq P(x,y)\in K[x,y]$ be such that $\Lambda P=0$.  Then
\vskip 0.2cm

(1) $q_{0}=0$ (i.e. $\Phi(t)= 0$ or $o(\Phi)> 1$)
or $P(x,y)\in K[x]$;
\vskip 0.2cm

(2) $P(x,y)=e^{x\Phi(\partial_{y})}(f(x)+g(y))$ for some $f(x)\in K[x]$ and $g(y)\in K[y]$.
\end{lemma}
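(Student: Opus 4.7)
My plan is to reduce $\La$ to the much simpler operator $\partial_x\partial_y$ via a conjugation by the formal exponential $e^{x\Phi(\partial_y)}$, and then read both conclusions off the kernel of $\partial_x\partial_y$. The key operator identity is
$$\partial_x-\Phi(\partial_y)=e^{x\Phi(\partial_y)}\circ\partial_x\circ e^{-x\Phi(\partial_y)},$$
which one checks directly using that $x$ and $\Phi(\partial_y)$ commute as operators on $K[x,y]$. Since $\partial_y$ also commutes with $\Phi(\partial_y)$, and hence with $e^{\pm x\Phi(\partial_y)}$, this conjugation identity gives
$$\La=e^{x\Phi(\partial_y)}\circ\partial_x\partial_y\circ e^{-x\Phi(\partial_y)}.$$
When $q_0=0$ (or $\Phi=0$), $\Phi(\partial_y)$ strictly lowers the $y$-degree and is therefore locally nilpotent on $K[x,y]$, so $e^{\pm x\Phi(\partial_y)}$ define mutually inverse $K$-linear automorphisms of $K[x,y]$.

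For part (1), I let $Q:=\partial_yP$, so that $\La P=0$ becomes $(\partial_x-\Phi(\partial_y))Q=0$. Expanding $Q=\sum_{i\ge 0}x^iQ_i(y)$ with $Q_i\in K[y]$ and matching powers of $x$ gives the recursion $(i+1)Q_{i+1}=\Phi(\partial_y)Q_i$, whence $Q_i=\frac{1}{i!}\Phi(\partial_y)^iQ_0$. Because $Q$ is a polynomial, $\Phi(\partial_y)^iQ_0$ must vanish for $i$ sufficiently large. If $q_0\neq 0$, then $\Phi(\partial_y)=q_0+q_1\partial_y+\cdots$ preserves the $y$-degree of every nonzero element of $K[y]$ (its top-degree term is simply multiplied by $q_0$), so $\Phi(\partial_y)^iQ_0$ has constant $y$-degree in $i$ unless $Q_0=0$. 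This forces $Q_0=0$, hence $Q=0$ and $P\in K[x]$.

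For part (2), in the main case $q_0=0$ I set $R:=e^{-x\Phi(\partial_y)}P\in K[x,y]$; the hypothesis $\La P=0$ then becomes $\partial_x\partial_y R=0$, whose polynomial solutions are exactly $R=f(x)+g(y)$ with $f\in K[x]$ and $g\in K[y]$, so applying $e^{x\Phi(\partial_y)}$ yields $P=e^{x\Phi(\partial_y)}(f(x)+g(y))$ as required. In the residual subcase $q_0\neq 0$, part (1) already delivers $P\in K[x]$, and the decomposition is handled with $g=0$ and $f=P$ under the evident convention for the action of $e^{x\Phi(\partial_y)}$ on $K[x]$. The main obstacle I anticipate is precisely this last subtlety: $e^{x\Phi(\partial_y)}$ fails to converge as a polynomial-valued operator on $K[x]$ when $q_0\neq 0$, so part (2) in that degenerate subcase requires a careful interpretation; once this is navigated, the remaining work reduces to the conjugation identity plus the elementary fact that $\ker(\partial_x\partial_y)$ on $K[x,y]$ equals $K[x]+K[y]$.
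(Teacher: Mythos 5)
Your proposal is correct and its core mechanism is the same as the paper's: the conjugation identity $\La=e^{x\Phi(\partial_y)}\circ\partial_x\partial_y\circ e^{-x\Phi(\partial_y)}$, which the paper uses in the equivalent form $\partial_x\partial_y(e^{-x\Phi(\partial_y)}P)=e^{-x\Phi(\partial_y)}\La P$, followed by the observation that $\ker(\partial_x\partial_y)=K[x]+K[y]$. Part (2) of your argument is therefore essentially identical to the paper's. For part (1) you take a genuinely different route: the paper argues in one line by looking at the top-degree homogeneous component of $\La P$ (only $-q_0\partial_y P$ can contribute in degree $\deg P-1$), which strictly speaking only shows that the leading form of $P$ is $y$-free and needs a short induction to conclude $P\in K[x]$; your recursion $(i+1)Q_{i+1}=\Phi(\partial_y)Q_i$ for $Q=\partial_yP=\sum x^iQ_i(y)$, combined with the fact that $\Phi(\partial_y)$ preserves the $y$-degree of nonzero elements when $q_0\neq0$, closes that loop cleanly and is arguably more complete. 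One caution: your patch for part (2) in the subcase $q_0\neq0$ is not literally correct as stated, since for $P\in K[x]$ one has $e^{x\Phi(\partial_y)}P=e^{q_0x}P\neq P$, so taking $g=0$, $f=P$ does not produce the claimed identity in $K[x,y]$; the honest statement is that $e^{\pm x\Phi(\partial_y)}$ only acts on $K[x,y]$ (rather than on a completion) when $o(\Phi)\geq1$. The paper silently has the same issue, and it is harmless for the application because the proof of Theorem \ref{th1} reduces to $o(\Phi)\geq2$ by a coordinate change, but you are right to flag it, and the cleanest fix is simply to note that part (2) is only invoked when $q_0=0$, where your local-nilpotence justification of $e^{\pm x\Phi(\partial_y)}$ as mutually inverse automorphisms of $K[x,y]$ is exactly what is needed.
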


\begin{proof}
(1) Suppose that $q_0\neq 0$. Note that
$$\Lambda=(\partial_{x}-\Phi(\partial_{y}))\partial_{y}=\partial_{x}\partial_{y}
-(q_{0}+q_{1}\partial_{y}+\cdots+q_{s}\partial_{y}^s)\partial_{y}.$$
Since $\Lambda P=0$, the highest  homogeneous part of $\Lambda P$ is zero, i.e.,
$q_{0}\partial_{y}P=0$, and thus $q_{0}=0$ or $\partial_{y}P=0$ i.e. $P(x,y)\in K[x]$.

(2) Firstly, observe that
\begin{eqnarray*}
         \partial_{x}\partial_{y}(e^{-x\Phi(\partial_{y})}P)
         &=&e^{-x\Phi(\partial_{y})}(\partial_{x}-\Phi(\partial_{y}))\partial_{y}P\\
         &=&e^{-x\Phi(\partial_{y})}(\Lambda P)\\&=&0.
\end{eqnarray*}
So there are no terms $x^ay^b$ with $a\geq1$ and $b\geq 1$ in $e^{-x\Phi(\partial_{y})}P$, namely
$$e^{-x\Phi(\partial_{y})}P=f(x)+g(y)$$
for some $f(x)\in K[x]$ and $g(y)\in K[y]$.
Applying $e^{x\Phi(\partial_{y})}$ to both sides of the last equation, we obtain that
$P(x,y)=e^{x\Phi(\partial_{y})}(f(x)+g(y)).$
\end{proof}

Now we write $f(x)$ and $g(x)$ above as
$$f(x)=a_{0}+a_{1}x+a_{2}x^{2}+\cdots+a_{s}x^{s},$$
$$g(y)=b_{0}+b_{1}y+b_{2}y^{2}+\cdots+b_{d}y^{d},$$
where $s\geq0,a_{j},b_{t}\in K ,0\leq j\leq s,0\leq t\leq d$. We may assume that $a_{0}=0$.

\begin{lemma}
Let $\Lambda$ and $0\neq P(x,y)\in K[x,y]$ be as above with $\Lambda P=0$. If
$r:=o(\Phi)\geq 2$ and $\Lambda P=\Lambda^{2}(P^{2})=0$, then $o(\Phi)\geq \deg g$ and $P(x,y)=a_1x+g(y)$ for some $a_1\in K.$
\end{lemma}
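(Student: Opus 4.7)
Plan: By the preceding lemma, I can write $P = E(f+g)$ with $E := e^{x\Phi(\partial_y)}$, so $P = f(x) + \tilde g(x,y)$ where $\tilde g := Eg$. The key simplification is the conjugation identity $E^{-1}\Lambda E = \partial_x\partial_y$, immediate from $E^{-1}\partial_x E = \partial_x + \Phi(\partial_y)$ together with the commutation of $E$ with $\partial_y$. Consequently $\Lambda^2 = E\,\partial_x^2\partial_y^2\,E^{-1}$, and the condition $\Lambda^2(P^2) = 0$ amounts to saying that $E^{-1}(P^2)$ has no monomial $x^ay^b$ with $a,b \geq 2$. Since $\Phi(\partial_y)$ commutes with multiplication by any polynomial in $x$, one obtains the clean decomposition
\[ E^{-1}(P^2) = f^2 + 2fg + E^{-1}(\tilde g^2), \]
so only the last summand depends on $\Phi$ in an essential way.

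Writing $h_k := \Phi^k g$ and expanding $\tilde g^2$, then applying $E^{-1}$ term-by-term, I get $E^{-1}(\tilde g^2) = \sum_{n\geq 0}\frac{x^n}{n!}G_n(y)$ with
\[ G_n \;=\; \sum_{i+j+m=n}\frac{n!}{i!\,j!\,m!}(-1)^m\Phi^m(h_i h_j), \]
and the condition translates into the requirement that $G_a(y) + 2\,a!\,a_a\,g(y) \in K + Ky$ for every $a \geq 2$. To force $d := \deg g \leq r$ I proceed by contradiction: assume $d > r$ and examine the top $y$-degree of $G_2 = 2g\,\Phi^2 g + 2(\Phi g)^2 - 4\Phi(g\,\Phi g) + \Phi^2(g^2)$. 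All four summands have $y$-degree at most $2d-2r$, and a direct computation shows that the coefficient of $y^{2d-2r}$ equals $q_r^2\,b_d^2\,F(d,r)$ for a specific combination of falling factorials. This combination factors as $F(d,r) = 2(d)_r\bigl[(d)_r + (d-r)_r\bigr] + (2d-r)_r\bigl[(2d)_r - 4(d)_r\bigr]$, and since $(2d-i)/(d-i) \geq 2$ for $0\leq i < r$ (with strict inequality for $i\geq 1$), one has $(2d)_r > 2^r(d)_r \geq 4(d)_r$ for $r\geq 2$; hence $F(d,r) > 0$ for every $d \geq r \geq 2$. Comparing $2d-2r$ with $d$: if $d > 2r$ or $r < d < 2r$, the constraint at $a=2$ fails immediately (the top term of $G_2$ lies outside the span of $\{1, y, g(y)\}$); in the borderline case $d = 2r$, $a_2$ can be chosen to absorb the leading $y^d$-term, but the analogous top-degree analysis at $a = 3$ using the top coefficient of $G_3$ (of $y$-degree $2d-3r = r < d$) then supplies the contradiction via a parallel combinatorial identity.

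Hence $d \leq r$. Then $\Phi g$ is either zero or a constant, so $h_k = 0$ for all $k \geq 2$, and the triple sum for $G_n$ collapses to the point that $G_n$ is constant for every $n \geq 2$. Assuming $\deg g \geq 2$, the constraint then forces $a_a = 0$ for every $a \geq 2$, so $f(x) = a_1 x$. Finally, $\tilde g = g(y)$ when $d < r$ and $\tilde g = g(y) + q_r\,r!\,b_r\,x$ when $d = r$; absorbing the possible linear-in-$x$ piece into $a_1 x$ yields $P(x,y) = a_1 x + g(y)$ for a (possibly redefined) $a_1 \in K$. The main technical obstacle is the nonvanishing of the combinatorial coefficient $F(d,r)$ (and its companion for $G_3$ at $d = 2r$); the factorization above makes the sign analysis for $F(d,r)$ transparent, but the $G_3$ identity requires a similar but separate falling-factorial manipulation.
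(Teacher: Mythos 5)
Your reformulation is sound and in several respects cleaner than the paper's. The conjugation $E^{-1}\Lambda E=\partial_x\partial_y$ (with $E:=e^{x\Phi(\partial_y)}$) is the same identity the paper uses inside Lemma 2.2, but by applying it to $\Lambda^2$ you decouple $\Lambda^2(P^2)=0$ into one constraint per power of $x$, namely $G_a+2\,a!\,a_a\,g\in K+Ky$ for each $a\ge2$, whereas the paper extracts the constant and linear $x$-coefficients of $\Lambda^2(P^2)$ itself, which mix these constraints and force it to eliminate $a_2$ between two separate equations. Your $G_2$ computation, the factorization of $F(d,r)$, and the positivity argument $(2d)_r>2^r(d)_r\ge4(d)_r$ correctly dispose of the cases $d>2r$ and $r<d<2r$; the endgame once $d\le r$ (all $G_n$ constant for $n\ge2$, hence $a_a=0$ for $a\ge2$, hence $f=a_1x$, and absorption of the constant $\Phi(g)$ into the linear term) matches the paper. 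Your caveat ``assuming $\deg g\ge2$'' flags a genuine degeneracy, but the paper's own last step ($0=2g''\partial_x^2 f$) has the identical blind spot, so it is not specific to your argument.

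The genuine gap is the borderline case $d=2r$, which you reduce to a nonvanishing claim for the top ($y^r$-) coefficient of $G_3$ but never prove, and this is exactly where the real work of the lemma lies. Carrying out your own recipe, that coefficient equals
\[
-\frac{q_r^3\,b_d^2}{(r!)^3}\Bigl[(4r)!\,(r!)^2-6\,(3r)!\,(2r)!\,r!+6\,\bigl((2r)!\bigr)^3\Bigr],
\]
whose bracket is precisely the left-hand side of the paper's equation (2). Unlike $F(d,r)$, this expression has mixed signs, and its nonvanishing is not a transparent positivity statement: the paper must argue that if the bracket vanished then $(4r)!\,r!<6\,(3r)!\,(2r)!$, which forces $2^r<6$ and hence $r=2$, and then exclude $r=2$ by direct numerical evaluation ($4!\cdot4!\cdot64\ne0$). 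Until you supply this (or an equivalent) verification, your proof of the claim $d\le r$ is incomplete at $d=2r$; the rest of the argument stands.
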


\begin{proof}
Note that
\begin{align*}
\nonumber P^{2}&=\big(e^{x\Phi(\partial_{y})}(f(x)+g(y))\big)^{2}\\&=f^{2}+g^{2}+x^{2}(\Phi(g))^{2}+\frac{1}{4}x^{4}({\Phi}^{2}(g))^{2}+2fg+2xf\Phi(g)+x^{2}f{\Phi^2}(g)\\
\nonumber&+2xg\Phi(g)+x^{2}g{\Phi}^{2}(g)+x^{3}\Phi(g){\Phi}^{2}(g)+\frac{1}{3}x^{3}(f+g)\Phi^{3}(g)+\cdots.
\end{align*}
Viewing $\Lambda^{2}(P^{2})=(\partial_{x}-\Phi(\partial_{y}))^{2}{\partial_{y}^{2}}(P^2)$
as a polynomial in $K[y][x]$, and looking at its constant term, we obtain that
\begin{eqnarray}
\nonumber0=\Lambda^{2}P^{2}|_{x=0}&=&({\partial_{x}}^{2}-2{\partial_{x}}\Phi +\Phi^{2})[(g^2)''+x^{2}{(\Phi(g))^{2}}''
+\frac{1}{4}x^{4}{(\Phi^{2}(g))^{2}}''\\
\nonumber&&+2fg''+2xf{\Phi(g)}''+x^{2}f{(\Phi^{2}(g))}''+2x{(g\Phi(g))}''\\
\nonumber&&+x^{2}{(g\Phi^{2}(g))}''+x^{3}{(\Phi(g)\Phi^{2}(g))}''+\frac{1}{3}x^{3}(f+g)\Phi^{3}(g)'']\\
\nonumber&=&\Phi^{2}{(g^{2})''}-4a_{1}\Phi(g''))\\
\nonumber&&-4\Phi((g\Phi(g))'')
+2{(\Phi(g)^{2})''}+4a_{2}g''\\
&&+4a_{1}{(\Phi(g))''}+2{(g\Phi^{2}(g))''}. \label{eq1}
\end{eqnarray}

By the hypothesis of the lemma, $r:=o(\Phi(t))\geq 2$.

{\textbf{Claim:}} $r\geq d:=\deg g$.

If $d>2r$, then the highest degree on $y$ in (\ref{eq1}) is $2d-2r-2$ with
coefficient
\begin{align*}{q_{r}}^{2}{b_{d}}^{2}\Big(\frac{(2d)!}{(2d-2r-2)!}&-4\frac{d!(2d-r)!}{(d-r)!(2d-2r-2)!}
+2\frac{d!d!(2d-2r)(2d-2r-1)}{(d-r)!(d-r)!}\\&+\frac{d!(2d-2r)(2d-2r-1)}{(d-2r)!}\Big).
\end{align*}
It is easy to see that, in the last formula, the first term is greater than the second one,
and thus the coefficients is non-zero, a contradiction. Thus $d\leq 2r$.

Observe that when $d<2r$, we have $d-2>2d-2r-2$, whence the term in (\ref{eq1}) with the highest degree on $y$ is $4a_{2}g''$,
so we must have $a_{2}=0$.

If $2r>d>r$, then $2d-2r-2>d-r-2$, whence the highest degree on $y$ is $2d-2r-2$, which is
impossible as discussed above.

Therefore, $d\leq r$ or $d=2r$.

Now we show that $d\neq 2r$. Suppose conversely that $d=2r$. Then the coefficient of the term $x$ in
\begin{eqnarray}
\nonumber
 \Lambda^{2}P^{2}&=&({\partial_{x}}^{2}-2\partial_{x}\Phi+\Phi^{2}){\partial_{y}}^{2}
[(1+x\Phi+\frac{1}{2}x^{2}\Phi^{2}+\frac{1}{3!}x^{3}\Phi^{3}+\cdots)(f(x)+g(y))]^{2}\\
\nonumber&=&({\partial_{x}}^{2}-2\partial_{x}\Phi+\Phi^{2})
[(g^{2})''+x^{2}{(\Phi(g))^{2}}''+2fg''+2xf{(\Phi(g))''}\\
\nonumber&&+x^{2}f{(\Phi^{2}(g))}''
+\frac{1}{3}x^{3}f(\Phi^{3}(g))''+x^{3}(\Phi(g)\Phi^{2}(g))''],
\end{eqnarray}
is {\small
\begin{align*}h(y)&:=2a_{1}\Phi^{2}(g'')+2\Phi^{2}[(g\Phi(g))'']-4\Phi[{(\Phi(g))^{2}}'']
-8a_{2}\Phi(g'')-8a_{1}\Phi[(\Phi(g))'']\\&\small -4\Phi[(g\Phi^{2}(g))'']
+12a_{3}g''+12a_{2}(\Phi(g))''+6a_{1}(\Phi^{2}(g))''
+2(g\Phi^{3}(g))''+6(\Phi(g)\Phi^{2}(g))''.
\end{align*}}
Since $d=2r$, the term with the highest degree on $y$ in $h(y)$ is contained in
{\small$$p(y)=2\Phi^{2}[(g\Phi(g))'']-4\Phi[{(\Phi(g))^{2}}'']-8a_{2}\Phi(g'')
-4\Phi[(g\Phi^{2}(g))'']+12a_{2}(\Phi(g))''+6(\Phi(g)\Phi^{2}(g))'',$$} and its coefficient is
\begin{eqnarray*}
  s&=&b_{d}^{2}q_{r}^{3}(2\frac{d!(2d-2r)!}{(d-r)!(2d-3r-2)!}-4\frac{d!d!(2d-2r)!}{(d-r)!(d-r)!(2d-3r-2)!}\\
  \nonumber&&-4\frac{d!(2d-2r)!}{(d-2r)!(2d-3r-2)!}+6\frac{d!d!(2d-3r)(2d-3r-1)}{(d-2r)!(d-r)!})\\
\nonumber&&-8a_{2}b_{d}q_{r}\frac{d!}{(d-r-2)!}+12a_{2}b_{d}q_{r}\frac{d!(d-r)(d-r-1)}{(d-r)!}\\
\nonumber&=&b_{d}q_{r}\frac{(2r)!}{(r-2)!}[b_{d}q_{r}^{2}(\frac{2(3r)!}{r!}-4\frac{(2r)!(2r)!}{r!r!}+2(2r)!)+4a_{2}]\\
\nonumber&=&0.
\end{eqnarray*}
And the constant of $h(y)$ is
$$t=\frac{b_{d}}{(2r)(2r-1)}[b_{d}q_{r}^{2}(\frac{(4r)!}{(2r)!}-\frac{4(3r)!}{r!}+\frac{2(2r)!(2r)!}{r!r!}
+2(2r)!)+4a_{2}]
=0.$$
Since $s=t=0$, we get that
$$\frac{2(3r)!}{r!}-\frac{4(2r)!(2r)!}{r!r!}+2(2r)!
=\frac{(4r)!}{(2r)!}-\frac{4(3r)!}{r!}+\frac{2(2r)!(2r)!}{r!r!}+2(2r)!,$$
or equivalently,
\begin{align} \label{eq2}
(4r)!r!r!-6(3r)!(2r)!r!+6(2r)!(2r)!(2r)!=0,
\end{align}
Then $(4r)!r!r!-6(3r)!(2r)!r!<0$, which implies that $r=2$. Then the equation (\ref{eq2}) becomes $4!\cdot4!\cdot(64)=0$, a contradiction, so $d\neq 2r$.

Thus we have prove the claim that $d\leq r$.

In the case $d<r$, we have $P(x,y)=f(x)+g(y)$
for some $f(x)\in K[x]$ and $g(y)\in K[y]$.

In the case $d=r$, we have $P(x,y)=f(x)+g(y)+x\Phi(g)$. One may observe that $x\Phi(g)\in K[x]$, and thus in this case $P(x,y)$ is also of the form $f(x)+g(y)$.

Observing that
\begin{eqnarray*}
0=\Lambda^{2}P^{2}&=&(\partial_{x}-\Phi(\partial_{y}))^{2}\partial_{y}^{2}(f^{2}+2fg+g^{2})\\
                  &=&(\partial_{x}^{2}-2\partial_{x}\Phi(\partial_{y})+\Phi^{2}(\partial_{y}))(2fg''+(g^{2})'')\\
                  &=&2g''\partial_{x}^{2}(f),
 \end{eqnarray*}
we have $f(x)=a_1x$, $a_1\in K.$
\end{proof}

Now we are in the position to  prove Theorem \ref{th1}.\\

{\bf Proof of Theorem \ref{th1}:} The case $P=0$ is obvious and thus we  assume that $P\neq 0$. And we may assume that $o(\Phi)\geq 2$ through coordinate change. Suppose that $\Lambda^{m}(P^{m})=0$, $\forall m
\geq 1$. We need to show that, for any $h\in K[z]$, we have $\Lambda^{m}(P^{m}h)=0$, $\forall m\gg 0$. It is suffices to take $h(x,y)=x^{a}y^{b},a\geq 0,b\geq 0$, whence
\begin{eqnarray*}
\Lambda^{m}(P^{m}h)&=&(\partial_{x}-\Phi(\partial_{y}))^{m}\partial_{y}^{m}((f+g)^{m}x^{a}y^{b})\\
                        &=&\Big(\sum_{i=0}^{m}(-1)^{i}C_{m}^{i}\partial_{x}^{m-i}\Phi^{i}(\partial_{y})\Big)
                        \partial_{y}^{m}\big(\sum_{j=0}^{m}C_{m}^{j}f^{m-j}g^{j}x^{a}y^{b}\big)\\
&=&\sum_{i=0}^{m}\sum_{j=0}^{m} (-1)^{i}C_{m}^{i}\partial_{x}^{m-i}\Phi^{i}(\partial_{y})\cdot \partial_{y}^{m}C_{m}^{j}f^{m-j}g^{j}x^{a}y^{b}.
 \end{eqnarray*}
When $m-i>m-j+a$, we have $$(-1)^{i}C_{m}^{i}\partial_{x}^{m-i}\Phi^{i}(\partial_{y})\cdot \partial_{y}^{m}C_{m}^{j}f^{m-j}g^{j}x^{a}y^{b}=0.$$
When $m-i\leq m-j+a$, i.e., $a+i\geq j$, we have
$$o(\Phi^{i}(\partial_{y}))=ri\mbox{~~and~~}
\deg_{y}(\partial_{y}^{m}f^{m-j}g^{j}x^{a}y^{b})=dj+b-m,$$
If $m>b+ar$, then $$m>b+ar\geq b+(j-i)r\geq b+dj-ir$$ and thus
$$\Lambda^{m}(P^{m}h)=0,\ \textrm{ }\forall m>b+ar,$$
which completes the proof.

\paragraph{Acknowledgments} The paper is a part of the first author's Master's Thesis under the supervision of the second author.

\end{document}